\newcommand{\lap}{\Delta}
\newcommand{\e}{\varepsilon}
\newcommand{\R}{\mathbb{R}}
\newcommand{\K}{\mathbb{K}}
\newcommand{\dd}{\partial}
\newcommand{\dist}{\operatorname{dist}}
\renewcommand{\div}{\operatorname{div}}
\begin{document}
\renewcommand{\rmdefault}{cmr}
\newtheorem{thm}{Theorem}
\newtheorem{lem}[thm]{Lemma}
\newtheorem{cor}[thm]{Corollary}
\newtheorem{prop}[thm]{Proposition}
\newtheorem{definition}[thm]{Definition}
\newtheorem{rem}[thm]{Remark}
\newtheorem{ass}[thm]{Assumption}

\title{Optimal regularity for the obstacle problem for the $p$-Laplacian }

\author{John Andersson}
\address{Department of Mathematics, Royal Institute of Technology, 100 44 Stockholm, Sweden}
\email{johnan@kth.se}
\author{Erik Lindgren}
\email{eriklin@kth.se}
%\address{Department of Mathematics, Royal Institute of Technology, 100 44 Stockholm, Sweden}
\author{Henrik Shahgholian}
\email{henriksh@math.kth.se}
%\address{Department of Mathematics, Royal Institute of Technology, 100 44 Stockholm, Sweden}

\begin{abstract}
In this paper we discuss the obstacle problem for the $p$-Laplace operator. We prove optimal growth results for the solution. Of particular interest is the point-wise regularity of the solution at free boundary points.
The most surprising result we prove is the one for the $p$-obstacle problem: Find the \emph{smallest} $u$ such that 
$$
\hbox{div} (|\nabla u|^{p-2}\nabla u) \leq 0, \qquad u\geq \phi, \qquad \hbox{in } B_1,
$$
with $\phi \in C^{1,1}(B_1)$  and given boundary datum on $\partial B_1$. We prove that the solution is uniformly  $C^{1,1}$ at free boundary points. Similar results are obtained in the case of an inhomogeneity belonging to $L^\infty$.
When applied to the corresponding parabolic problem, these results imply that any solution which is Lipschitz in time is $C^{1,\frac{1}{p-1}}$ in the spatial variables.
\end{abstract}
\maketitle

\section{Introduction}
\subsection{Problem formulation}
In this paper we consider the optimal regularity of minimizers of the constrained $p$-Dirichlet energy 
$$
\int_{B_1} \frac{|\nabla v|^p}{p}+fv\, d x \ , \qquad  v \in \K:= \{w\in W^{1,p}(B_1): \ w \geq \phi, \ w=g \text{ on }\dd B_1 \},
$$ 
where $B_1 \subset \R^n$ ($n\geq 2$) is the unit ball, and
 $\phi$ and $g$ are given functions (in appropriate spaces).
This is  equivalent to  finding the smallest function $u$ such that 
$$
\lap_p u \leq f, \qquad  u\geq \phi \ ,
$$
given the boundary conditions on $\dd B_1$. Here, and in the sequel, $\lap_p u =\div(|\nabla u|^{p-2}\nabla u)$ is the $p$-Laplace operator and $1<p<\infty$.

Of particular interest is the set $\Omega=\{u>\phi\}\cap B_1$ and the free boundary $\Gamma = \partial \{u>\phi\}\cap B_1$. To better understand
the free boundary, $\Gamma$, it is important to first understand the point-wise regularity of the solution $u$. 
In the homogeneous case $f\equiv 0$, we prove, the rather ``unexpected'' result, that the point-wise regularity of $u$ at a free boundary point is the same as the regularity of 
the obstacle $\phi$, at least up to $C^{1,1}$.
That is, if $\phi\in C^{1,1}$ then $u$ leaves $\phi$ in a quadratic fashion. This  surprising result implies, in turn,  that 
the presence of the obstacle actually improves the regularity of the solution to the solution to the $p-$harmonic obstacle problem, at free boundary points.

In the more general and inhomogeneous case we prove that if $\phi\in C^{1,\beta}$ and if $f\in L^\infty(B_1)$ 
then $u$ leaves $\phi$ in $r^{1+\alpha}$-fashion, where
$$
\alpha = \min\left(\frac{1}{p-1},\beta\right).
$$

In the second part of the paper we apply the aforementioned results to the $p$-parabolic obstacle problem. The $p$-parabolic obstacle problem amounts to finding the \emph{smallest} function $u$, defined on $B_1\times (0,T)$, with given boundary and initial data, such that 
$$\left\{ 
\begin{array}{lr}
\lap_p u-\frac{\partial u}{\partial t} \leq 0,\\ u\geq \phi.\end{array} 
\right.
$$
Under the assumption that $\frac{\partial u}{\partial t}\in L^\infty$, we obtain the optimal growth in the spatial variable of order $1+\alpha$, where
$$
\alpha = \min\left(\frac{1}{p-1},\beta\right).
$$
In addition, we show that if the initial datum satisfies $|\lap_p g|\leq C$ and the obstacle and the spatial boundary datum are Lipschitz in time, then so is the solution.

\subsection{Known results}
The non-degenerate elliptic obstacle problem, $p=2$, is very well studied and the regularity properties of the solution are well known. 
It was proved by Frehse in \cite{Fre72} and Kinderlehrer in \cite{Kin70} (in two dimensions) that $u$ is $C^{1,1}$, provided  the same is true for the  obstacle. 
Later in \cite{Caf98} it was proved that the free boundary, except at cusp-like points, is a $C^\infty$ hypersurface. This result was 
sharpened even further in two dimensions by Monneau in \cite{Mon03}. A related but somewhat different problem was studied in 
\cite{KKPS00} and \cite{LS03}. See also \cite{Nor86} and \cite{Lin88} for regularity results relating to the $p-$harmonic obstacle problem. In \cite{KZ02}, the corresponding result of Theorem \ref{thm:quad} is proved, when $C^{1,\beta}$ is replaced by $C^{0,\alpha}$ for some small $\alpha\in (0,1)$, for a more general class of quasilinear operators. The resemblance of the proofs is striking.

For the parabolic obstacle problem, there is a series of papers  \cite{BDM05}, \cite{BDM06}, \cite{Bla06a} and \cite{Bla06b}, where optimal regularity as well as the regularity of the free boundary is proved in the case when $p=2$, for variable coefficents and variable right-hand side. In the papers \cite{LM13a} and \cite{LM13b}, the right-hand side is allowed to be merely in $L^p$. In \cite{PS07}, the elliptic part of the operator is allowed to be fully nonlinear.  A slightly more general free boundary problem of parabolic type is studied in \cite{CPS04}, \cite{EP08}, \cite{EL12} and \cite{ALS13}.

In the $p-$parabolic case we refer the reader to the literature: \cite{Choe}, \cite{KKS09}, \cite{Lin12}, \cite{LP12} and \cite{KMN12}. 
One of the authors studied a quite similar problem in \cite{Sha03}.

\subsection{Main idea} Roughly speaking, the main idea for the elliptic problem is the following: When the gradient is large then the equation is 
non-degenerate and classical estimates apply. But on the other hand, when the gradient is small we can rescale and obtain 
uniform bounds using the weak Harnack inequality (which applies to supersolutions).

\subsection{Acknowledgement}
We thank Peter Lindqvist for several  encouraging and useful comments, after reading the manuscript at an early stage. The second author is supported by the Swedish Research Council, grant no. 2012-3124. The third author is partially supported by the Swedish Research Council.

%%%%%%%%%%%%%%%%%%%%%%%%%%%%%%%%%%%%%%%%%%%%
\section{The elliptic problem}
%%%%%%%%%%%%%%%%%%%%%%%%%%%%%%%%%%%%%%%%%%%%

In this part we treat the elliptic problem. Given an open, bounded set $\Omega$ and some boundary datum given by the restriction of 
$g\in W^{1,p}(\Omega)$ to  $\partial \Omega$, we say that $u$ is a solution of the $p$-obstacle problem in $\Omega$ with obstacle $\phi\in C^{1,\beta}$, $g\ge \phi$, and with inhomogeneity $f\in L^\infty(B_1)$,
if $u$ minimizes
$$
\int_{\Omega}\frac{|\nabla u|^p}{p} +fu \, dx
$$
subject to $u\geq\phi$ in $\Omega$ and $u=g$ on $\partial \Omega$. 

The main result of this paper is the optimal growth at free boundary points.
\begin{thm}\label{thm:quad} Let $p\in (1,\infty)$, $\beta\in (0,1]$ and $u$ be a solution to the $p$-obstacle problem in $B_1$ with obstacle $\phi\in C^{1,\beta}(B_1)$ and $f\in L^{\infty}(B_1)$. Suppose further  
$$
 \|\phi\|_{C^{1,\beta}(B_1)}\leq N,\quad \|f\|_{L^\infty(B_1)}\leq L.
$$
Then for any point $y\in \Gamma\cap B_{1/2}$ and for $r<1/2$ 
\begin{equation}\label{ElliptEst}
\sup_{x\in B_r(y)}|u(x)-u(y)-(x-y)\cdot \nabla u(y)|\leq C(N^{p-1}+L)^\frac{1}{p-1} r^{1+\alpha},
\end{equation}
where $C=C(\beta,p)$ and
$$
\alpha = \min\left( \frac{1}{p-1},\beta \right),
$$
and  $\alpha =\beta$ if $f\equiv 0$. In particular, 
$$
\sup_{x\in B_r(y)}|u(x)-\phi(x)|\leq (C+1)(N^{p-1}+L)^\frac{1}{p-1}r^{1+\alpha}.
$$
\end{thm}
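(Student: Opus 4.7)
The natural strategy is proof by contradiction combined with a blow-up argument. Set $P(x)=u(y)+(x-y)\cdot\nabla u(y)$ and $\Theta(r)=\sup_{B_r(y)}|u-P|$; the goal is $\Theta(r)\le C_0 r^{1+\alpha}$ with $C_0=C(\beta,p)(N^{p-1}+L)^{1/(p-1)}$. If this fails, one extracts a sequence $(u_j,\phi_j,f_j,y_j,r_j)$ satisfying the hypotheses with $M_j:=\Theta_j(r_j)/r_j^{1+\alpha}\to\infty$, and after replacing $r_j$ by an almost-maximal radius, one gains the dyadic control $\Theta_j(R)/R^{1+\alpha}\le CM_j$ for all $R\in[r_j,1/2]$. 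The normalized rescaling
\begin{equation*}
v_j(x)=\frac{u_j(y_j+r_jx)-P_j(y_j+r_jx)}{M_j\,r_j^{1+\alpha}}
\end{equation*}
then satisfies $\|v_j\|_{L^\infty(B_1)}=1$, $v_j(0)=0$, $\nabla v_j(0)=0$, and the polynomial growth $\|v_j\|_{L^\infty(B_R)}\le CR^{1+\alpha}$ on all scales up to $1/(2r_j)$.

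Set $A_j=\nabla u_j(y_j)=\nabla\phi_j(y_j)$ and $\lambda_j=M_jr_j^\alpha$, so that $\nabla u_j=A_j+\lambda_j\nabla v_j$; the rescaled equation reads $\operatorname{div}\bigl(|A_j+\lambda_j\nabla v_j|^{p-2}(A_j+\lambda_j\nabla v_j)\bigr)=r_j f_j$ on the non-coincidence set, while the rescaled obstacle $\tilde\phi_j$ is bounded by $(N/M_j)\,r_j^{\beta-\alpha}|x|^{1+\beta}$ and so vanishes locally uniformly since $\alpha\le\beta$ and $M_j\to\infty$. A dichotomy on the ratio $|A_j|/\lambda_j$ concludes the argument. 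In the \emph{non-degenerate} regime $|A_j|/\lambda_j\to\infty$, dividing the equation by $\lambda_j|A_j|^{p-2}$ linearizes the operator; along a subsequence with $\hat A_j\to\hat A_\infty$, the $v_j$ satisfy a uniformly elliptic linear equation with constant coefficients, small errors, and source $r_jf_j/(\lambda_j|A_j|^{p-2})\to 0$ thanks to the calibration of $\alpha$ and $C_0$. Schauder compactness then yields a limit $v_\infty$ that is an entire solution of a linear uniformly elliptic equation with polynomial growth of order $1+\alpha<2$ and $v_\infty(0)=|\nabla v_\infty(0)|=0$, so a Liouville argument forces $v_\infty\equiv 0$, contradicting $\|v_\infty\|_{L^\infty(B_1)}=1$. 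In the \emph{degenerate} regime $|A_j|/\lambda_j$ bounded, dividing by $\lambda_j^{p-1}$ produces a (possibly shifted) $p$-Laplace inequality whose right-hand side $r_jf_j/\lambda_j^{p-1}$ also tends to zero by the choice of $\alpha$; using uniform interior $C^{1,\sigma}$ estimates for the $p$-obstacle problem, the limit $v_\infty$ is $p$-superharmonic on $\mathbb{R}^n$, nonnegative (as $\tilde\phi_j\to 0$), and vanishes at the origin, so the strong minimum principle (via the weak Harnack inequality) forces $v_\infty\equiv 0$, again a contradiction.

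The main obstacle is the precise calibration of $\alpha$ and $C_0$ which ensures that both the rescaled source and the rescaled obstacle vanish in the blow-up limit in both regimes; this is exactly where the formula $\alpha=\min(1/(p-1),\beta)$ arises, and why the homogeneous case $f\equiv 0$ allows the full obstacle regularity $\alpha=\beta$. A secondary hurdle is uniform $C^{1,\sigma}$ compactness of the $v_j$ in the degenerate regime where the gradient scale $\lambda_j\to 0$: one needs interior regularity estimates for the $p$-obstacle problem whose constants do not deteriorate as the gradient degenerates, and this is precisely where the weak Harnack inequality for $p$-supersolutions advertised in the introduction plays its role. The intermediate subcase $|A_j|/\lambda_j\to c\in(0,\infty)$ is handled via the shifted operator, which is uniformly elliptic on compact sets and therefore reduces to the same linear Liouville step as the non-degenerate case.
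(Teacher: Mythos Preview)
Your approach is genuinely different from the paper's. The paper argues \emph{directly}: after normalizing so that $\|\phi\|_{C^{1,\beta}}\le 1/2$ and $\|f\|_{L^\infty}\le 1$, it splits at each fixed scale $r$ according to whether $|\nabla u(y)|<r^\alpha$ or $|\nabla u(y)|\ge r^\alpha$. In the small-gradient case it rescales by $r^{1+\alpha}$ and applies the weak Harnack inequality for $p$-supersolutions together with the local maximum principle for subsolutions \emph{directly} to the rescaled function, obtaining the $L^\infty$ bound with no limiting procedure. In the large-gradient case it rescales at the special radius $r_y=|\nabla u(y)|^{1/\alpha}$ so that $|\nabla\tilde u(0)|=1$; the known $C^{1,\tau}$ estimate then guarantees $|\nabla\tilde u|\ge 1/2$ on a ball of fixed radius $r_0(p)$, whence the operator is uniformly elliptic there and the paper simply quotes the classical obstacle-problem regularity for uniformly elliptic divergence-form operators with H\"older coefficients. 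A short interpolation fills the range $r_0r_y<r<r_y$. No blow-up limit and no Liouville theorem appear, and the argument is in principle constructive.

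Your contradiction/blow-up scheme is a reasonable alternative, but the non-degenerate branch as written has a genuine gap. You assert that $v_\infty$ is an entire \emph{solution} of a constant-coefficient linear equation and invoke Liouville via ``growth of order $1+\alpha<2$''. Neither claim is justified: the origin is a free-boundary point for every $v_j$, so the limit is a priori only a \emph{supersolution}; and $\alpha=1$ occurs precisely in the headline case $f\equiv 0$, $\beta=1$, so the growth bound does not by itself exclude a nonzero quadratic. The repair is the same mechanism you use in the degenerate branch: since $\tilde\phi_j\to 0$ you have $v_\infty\ge 0$, $v_\infty(0)=0$, and $v_\infty$ is a nonnegative supersolution of a uniformly elliptic equation near the origin, so the strong minimum principle forces $v_\infty\equiv 0$. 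You should also be aware that the compactness step is not free: rescaling the a priori $C^{1,\sigma}$ estimate for $u_j$ yields $[\nabla v_j]_{C^\sigma}\le C r_j^{\sigma-\alpha}/M_j$, which need not stay bounded when $\sigma<\alpha$; one has to obtain uniform $C^{1,\sigma}$ bounds for $v_j$ from the rescaled problem itself (easy in the degenerate regime via $w_j=v_j+(A_j/\lambda_j)\cdot x$, but requiring a separate argument when $|A_j|/\lambda_j\to\infty$). This bookkeeping is exactly what the paper's direct weak-Harnack argument sidesteps.
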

\begin{proof} We give the proof in the case $f\not\equiv 0$. The only difference in proving the case $f\equiv 0$ would be the scaling of order $\beta$. By simply considering the normalized function 
$$
\frac{u}{(N^{p-1}+L)^\frac{1}{p-1}},
$$
we can assume that $u$ solves the $p$-obstacle problem with obstacle $\phi$ satisfying $\|\phi\|_{C^{1,\beta}(B_1)}\leq 1/2$ and with $\| f\|_{L^\infty(B_1)}\leq 1$. Then at any free boundary point $y$, we have $|\nabla u(y)|=|\nabla \phi(y)|\leq 1/2$. The proof is now divided into different cases. The correctly 
scaled estimate is then obtained in the end by multiplying the constant with $(N^{p-1}+L)^\frac{1}{p-1}$.\\ 

\noindent {\bf Case 1: When $|\nabla u(y)|< r^\alpha<(1/2)^\alpha$:} When $|\nabla u(y)|<r^\alpha$ it follows from the triangle inequality that 
$$
\sup_{x\in B_r(y)}|u(x)-u(y)|\leq Cr^{1+\alpha}
$$
implies
$$
 \sup_{x\in B_r(y)}|u(x)-u(y)-(x-y)\cdot \nabla u(y)|\leq (C+1)r^{1+\alpha}.
$$
It is therefore enough to prove 
$$
\sup_{B_r(y)} |u(x)-u(y)|\leq Cr^{1+\alpha},
$$
for some constant $C=C(\beta,p)$. To this end we define the rescaled functions
$$
\tilde \phi(x)=\frac{\phi(rx+y)-\phi(y)}{r^{1+\alpha}}
$$
and
$$
\tilde u (x)=\frac{u(rx+y)-u(y)}{r^{1+\alpha}}.
$$
We may estimate the $L^\infty$ norm of $\tilde{\phi}$ according to
$$
\|\tilde \phi\|_{L^\infty(B_1)}=\left\|\frac{\phi(rx+y)-\phi(y)}{r^{1+\alpha}}\right\|_{L^\infty(B_1)}\le 
$$
$$
\le \left\|\frac{ \phi(rx+y)-\phi(y)-r\nabla \phi(y)\cdot (x-y)}{r^{1+\alpha}}\right\|_{L^\infty(B_1)}+\left\| \frac{\nabla \phi(y)\cdot (x-y)}{r^{\alpha}}\right\|_{L^\infty(B_1)}\leq 3/2,
$$ 
where we used Proposition \ref{prop:standard} in the appendix to conclude that
$$
|\nabla \phi(y)| =|\nabla u(y)| \leq r^\alpha.
$$
We note that $\tilde u$ minimizes
$$
\int_{B_1}\frac{|\nabla \tilde u|^p}{p}+\tilde f\tilde u \,d x
$$
with $\tilde \phi$ as obstacle and where 
$$
\tilde f(x)=r^{1-\alpha(p-1)}f(rx),
$$
so that $\|\tilde f\|_{L^\infty(B_1)}\leq 1$. Thus,
 $$\lap_p \tilde u\leq \tilde f,\quad \text{ in $B_1$}.
 $$
 The weak Harnack inequality, see for instance Theorem 3.13 in \cite{MZ97}, 
applied to the non-negative function $\tilde u+3/2\ge -\| \tilde{\phi}\|_{L^\infty}+3/2\ge 0$ implies
$$
\|\tilde u+3/2\|_{L^s(B_\frac34)}\leq C_1(p)\inf_{B_\frac12}(\tilde u+3/2)\leq C_1(p)\left(\tilde \phi(0)+3/2\right)\leq  4C_1(p)=C_2(p).
$$
for some $s>1$. Now, let $$v=\max(\tilde u+3/2,\sup_{B_1}\tilde \phi+3/2).$$ Then $\lap_p v\geq \tilde f$\footnote{Observe that $\lap_p v=\lap_p (\max(\tilde u -\sup\tilde  \phi, 0) + 3/2 + \sup \tilde \phi)=\tilde f$ and $\lap_p (\tilde u -\sup\tilde  \phi)=\tilde f$ in the set $\{\tilde u -\sup\tilde  \phi >0\}$ and it is zero outside this set. By taking a test function of the form $\phi\eta((\tilde u -\sup\tilde  \phi)^+)$, with $\phi\in C_0^\infty(B_1)$ and $\eta$ a linear approximation of the identity, it follows that $\lap_p v\geq \tilde f$.} and thus from the sup-estimate for subsolutions (cf. Corollary 3.10 in \cite{MZ97}) we can conclude together with the estimate above that
$$
\sup_{B_\frac12} v\leq C_3(p)\|v\|_{L^s(B_\frac34)}\leq C_3(p)C_2(p).
$$
This implies, upon relabeling the constants, that 
\begin{equation}\label{eq:supestutilde}
\sup_{B_\frac12} \tilde u\leq C(p) .
\end{equation}
Since moreover $\tilde u\geq \tilde \phi \geq -3/2$, $\tilde u$ is uniformly bounded in $L^\infty(B_{1/2})$, which implies the desired estimate, for $r<1/4$. In order to obtain the estimate for $r\in (1/4,1/2)$ one just needs to increase the constant by $2^{1+\alpha}$.

%%%%%%%%%%%%%%%%%%%%%%%%%%%%%%%%%%%%%%%%%%%%%%%
%%%%%%%%%%%%%%%%%%%%%%%%%%%%%%%%%%%%%%%%%%%%%%%

\noindent {\bf Case 2: When $|\nabla u(y)|\geq r^\alpha$, $r<1/2$:} From Case 1 we know that 
\begin{equation}\label{eq:sry}
\sup_{B_{r_y}(y)} u(x)\leq C(p) r_y^{1+\alpha}
\end{equation}
where $r_y^\alpha=|\nabla u(y)|$. Let 
$$
\tilde \phi(x)=\frac{\phi(r_yx+y)-\phi(y)}{r_y^{1+\alpha}}.
$$
Then 
\begin{equation}\label{eq:gradeq1}
|\nabla \tilde \phi(0)|=|\nabla \tilde u(0)|=1.
\end{equation}
Define also
$$\tilde u(x)=\frac{u(r_yx+y)-u(y)}{r_y^{1+\alpha}}
$$
and
$$
\tilde f(x)=r_y^{1-\alpha(p-1)}f(r_y x).
$$
Then $\tilde u$ solves the $p$-obstacle problem in $B_1$ with $\tilde \phi$ as an obstacle and with inhomogeneity $\tilde f$, satisfying $\|\tilde f\|_{L^\infty(B_1)}\leq 1$. Moreover, from the assumption $\|\phi\|_{C^{1,\beta}}\le 1/2$, 
$$
\|\tilde \phi\|_{C^{1,\beta}(B_{1/2})}\leq C_4,
$$
and by \eqref{eq:sry} $\tilde u$ is uniformly bounded in $L^\infty(B_{1/2})$. From Proposition \ref{prop:standard} it follows that 
$$
\|\tilde u\|_{C^{1,\tau}(B_\frac12)}\leq C_5, \quad \tau =\tau(p), \quad C_5=C_5(p).
$$
This together with \eqref{eq:gradeq1} implies that we can find $r_0=r_0(p)$ so that $|\nabla \tilde u|\geq 1/2$ in $B_{r_0}$. Hence, $\tilde u$ is a uniformly bounded solution to the obstacle problem for a uniformly elliptic operator with $C^\tau$-coefficients in $B_{r_0}$, with a  $C^{1,\beta}$ regular obstacle and uniformly bounded inhomogeneity. From Proposition \ref{prop:standard} and Proposition \ref{prop:standard2}
$$
\|\tilde u\|_{C^{1,\alpha}(B_{r_0})}\leq C(p,\beta).
$$
Scaling back we obtain
$$
\| u\|_{C^{1,\alpha}(B_{r_0r_y}(y))}\leq C(p,\beta), 
$$
which in particular implies
$$
\sup_{B_r(y)}|u(x)-u(y)-(x-y)\cdot \nabla u(y)|\leq C r^{1+\alpha}, 
$$
for $r<r_0r_y=r_0|\nabla u(y)|^\frac{1}{\alpha}$.

\vspace{3mm}

\noindent {\bf Conclusion:} In both Case 1 and Case 2 we concluded that  
$$
\sup_{B_r(y)}|u(x)-u(y)-x\cdot\nabla u(y)|\leq C r^{1+\alpha}, 
$$
for all $r<1/2$ such that $r\le r_0|\nabla u(y)|^\frac{1}{\alpha}$ (Case 2) and when 
$r>|\nabla u(y)|^\frac{1}{\alpha}$ (Case 1). Therefore we only need to fill the gap when  
\begin{equation}\label{TheGap}
r_0|\nabla u(y)|^\frac{1}{\alpha}< r < |\nabla u(y)|^\frac{1}{\alpha}.
\end{equation}
Assume that $r$ is in the interval specified in (\ref{TheGap}). Then 
$$
\sup_{B_r(y)}|u(x)-u(y)-(x-y)\cdot\nabla u(y)|\leq \sup_{B_{r_y}(y)}|u(x)-u(y)-(x-y)\cdot\nabla u(y)|.$$
Hence, 
\begin{align*}\sup_{B_r(y)}|u(x)-u(y)-(x-y)\cdot\nabla u(y)|&\leq\sup_{B_{r_y}(y)}|u(x)-u(y)-(x-y)\cdot\nabla u(y)|\\&\leq C r_y^{1+\alpha}= C\frac{r_y^{1+\alpha}}{r^{1+\alpha}}r^{1+\alpha}
\leq \frac{C}{r_0^{1+\alpha}}r^{1+\alpha}.
\end{align*}
We thus have the estimate for all $r<1/2$. To obtain the estimate for the original $u$ (not rescaled by a factor $(N^{p-1}+L)^\frac{1}{p-1}$) one just needs to multiply the constant $C$ with $(N^{p-1}+L)^\frac{1}{p-1}$.

The last estimate follows from 
\begin{align*}
&\sup_{x\in B_r(y)}|u(x)-\phi(x)|\\
&=\sup_{x\in B_r(y)}|u(x)-u(y)-(x-y)\cdot \nabla u(y)+\phi(y)+(x-y)\cdot \nabla \phi(y)-\phi(x)|\\
&\leq  (C+1) r^{1+\alpha}.
\end{align*}
\end{proof}

\section{Non-degeneracy and porosity of the free boundary in the homogeneous case}
In this section we treat the homogeneous case in more detail, assuming also that $p>2$. We prove by standard arguments that the difference $u-\phi$ cannot decay faster than quadratic around free boundary points. 
This combined with the optimal quadratic growth implies, by a standard argument, that the free boundary $\Gamma$ is porous. We recall that $\Gamma\cap B_{1/2}$ is said to be \emph{porous} if there exists a $\delta>0$
such that for every $y\in \Gamma\cap B_{1/2}$ and $r\in (0,1/4)$
$$
\frac{|\Gamma\cap B_r(y)|}{|B_r(y)|}\le 1-\delta.
$$
Since this directly implies that $\Gamma$ has no Lebesgue points it follows that the free boundary has measure zero. The notion of porosity was introduced in \cite{Dol67}; See also the survey \cite{Zaj87}.  
%\ggreen{I think the argument below should work without $\phi \in C^2$. Needs more work thoug. Worth trying!}
\begin{prop}\label{prop:nondeg} Let $p\in (2,\infty)$ 
%\bblue{What is the reason for the restriction of $p$ here? not quite sure I can go through with the proof otherwise, $\lap_p v$ cont wrt $\e$?} 
and let $u$ be a solution to the $p$-obstacle problem in $B_1$ with obstacle $\phi\in C^2(B_1)$ with $f\equiv 0$. Suppose further that $\lap_p \phi <0$. Then there is a 
constant $\e=\e (\sup \lap_p \phi)$ such that for any $x^0\in \Gamma$ and $r<\dist(x^0,\partial B_1)$ there holds
$$
\sup_{\dd B_r(x^0)\cap \{u>\phi\}} (u-\phi)\geq \e r^2.
$$
\end{prop}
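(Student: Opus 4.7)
My plan is to adapt the classical Caffarelli barrier argument to the quasilinear setting, exploiting a linearization of $\lap_p$ valid in a neighborhood of $x^0$. The key preliminary step is to observe that $\sup\lap_p\phi<0$, combined with $\phi\in C^2$ and $p>2$, forces $|\nabla\phi|$ to be bounded uniformly below. Indeed, when $|\nabla\phi|\neq 0$,
$$
\lap_p\phi=|\nabla\phi|^{p-2}\Bigl(\lap\phi+(p-2)\tfrac{\nabla\phi\cdot D^2\phi\,\nabla\phi}{|\nabla\phi|^2}\Bigr),
$$
so $|\lap_p\phi|\le C(n,p)\|\phi\|_{C^2}|\nabla\phi|^{p-2}$, and the assumption $\lap_p\phi\le -c_0$ with $c_0:=-\sup\lap_p\phi>0$ forces $|\nabla\phi|\ge\delta_0>0$ on $B_1$. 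Since $\nabla u(x^0)=\nabla\phi(x^0)$ and $u\in C^{1,\tau}$ by Proposition \ref{prop:standard}, also $|\nabla u|\ge\delta_0/2$ on a ball $B_{\rho_0}(x^0)$ whose radius depends only on the data.

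On this ball I would linearize: writing $w:=u-\phi\ge 0$ and $F(\xi)=|\xi|^{p-2}\xi$,
$$
\lap_p u-\lap_p\phi=\div(A(x)\nabla w),\qquad A(x):=\int_0^1 DF(\nabla\phi+t\nabla w)\,dt,
$$
and the matrix $A$ is uniformly elliptic on $B_{\rho_0}(x^0)$ thanks to the preceding step. Since $\lap_p u=0$ in $\{u>\phi\}$ and $\lap_p\phi\le -c_0$, the function $w$ satisfies the linear uniformly elliptic inequality $Lw:=\div(A\nabla w)\ge c_0$ weakly in $B_{\rho_0}(x^0)\cap\{w>0\}$. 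Then I would run the standard barrier argument: for $r\le\rho_0/2$ choose $x^1\in B_{r/2}(x^0)\cap\{w>0\}$ (which exists since $x^0\in\partial\{u>\phi\}$), set $R:=r-|x^1-x^0|$, and consider $h(x):=w(x)-\kappa|x-x^1|^2$ on $B_R(x^1)\cap\{w>0\}$, with $\kappa$ (depending only on $c_0$ and the ellipticity constants of $L$) small enough that $Lh\ge 0$. Since $h(x^1)>0$ and $h\le 0$ on the $\Gamma$-portion of the boundary, the weak maximum principle gives a point $y\in\partial B_R(x^1)\cap\{w>0\}$ with $w(y)\ge\kappa R^2$; letting $x^1\to x^0$ yields $\sup_{\overline{B_r(x^0)}}w\ge\varepsilon r^2$.

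The final step, transferring this supremum from the closed ball to the spherical boundary, uses a direct pointwise argument not involving linearization: if $w$ achieves its supremum at an interior $y\in B_r(x^0)\cap\{u>\phi\}$, then $\nabla u(y)=\nabla\phi(y)$ and $D^2 u(y)\le D^2\phi(y)$ as matrices. Because $|\nabla\phi|\ge\delta_0>0$ on all of $B_1$, $u$ is $C^2$ near any such $y$ by elliptic regularity, and the inequalities on first and second derivatives give $\lap_p u(y)\le\lap_p\phi(y)\le -c_0<0$, contradicting $\lap_p u=0$ there. Thus any positive supremum of $w$ on $\overline{B_r(x^0)}\cap\{u>\phi\}$ must be attained on $\partial B_r(x^0)\cap\{u>\phi\}$, establishing the claim for $r\le\rho_0/2$; for $r\in(\rho_0/2,\dist(x^0,\partial B_1))$ the bound follows by absorbing the factor $r^2\le 1$ into $\varepsilon$. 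The main obstacle is the linearization step, where both $p>2$ and the strict negativity of $\lap_p\phi$ are essential — without them, $|\nabla u|$ could vanish at the free boundary and the $p$-Laplacian would be genuinely degenerate, invalidating the Caffarelli-type argument.
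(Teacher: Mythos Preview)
Your approach is substantially different from the paper's and considerably heavier. The paper compares $u$ directly with $v=\phi+\e|x-y|^2$ via the comparison principle for the $p$-Laplacian itself: since $\e\mapsto\lap_p(\phi+\e|\cdot-y|^2)$ is continuous when $p>2$ and equals $\lap_p\phi<0$ at $\e=0$, one can pick $\e$ so that $\lap_p v<0$; then $\lap_p u=0\ge\lap_p v$ in $\{u>\phi\}\cap B_r(x^0)$, $u(y)>\phi(y)=v(y)$, and the comparison principle forces a point on $\partial B_r(x^0)\cap\{u>\phi\}$ where $u\ge v$. No linearization, no $C^{1,\tau}$ estimate for $u$, and no explicit lower bound on $|\nabla\phi|$ are invoked. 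Your route --- first extracting $|\nabla\phi|\ge\delta_0$, then using Proposition~\ref{prop:standard} to find a ball where $|\nabla u|$ is bounded below, then linearizing --- introduces an extra dependence of $\e$ on $\rho_0$ and hence on the $C^{1,\tau}$ norm of $u$, which is not in the stated conclusion.

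There is also a concrete gap in your barrier step. You linearize in \emph{divergence} form, $Lw=\div(A\nabla w)$, and then assert $Lh\ge 0$ for $h=w-\kappa|x-x^1|^2$ with $\kappa$ depending only on $c_0$ and the ellipticity constants of $A$. But $L(|x-x^1|^2)=2\operatorname{tr}(A)+2(x-x^1)\cdot(\div A)$, and $\div A$ involves $D^2u$, which you have not bounded --- indeed there is no a~priori bound on $D^2u$ up to the free boundary at this stage. The quadratic barrier does not pair cleanly with a divergence-form operator unless the coefficients are $W^{1,\infty}$ with a known bound. A correct fix is to linearize in non-divergence form (set $\mathcal{L}\psi=a^{ij}(\nabla u)\partial_{ij}\psi$, note $\mathcal{L}\phi$ is close to $\lap_p\phi$ on $B_{\rho_0}(x^0)$ since $\nabla u$ is close to $\nabla\phi$, and then $\mathcal{L}(|x-x^1|^2)=2\operatorname{tr}a(\nabla u)$ is controlled by ellipticity alone), or --- far more simply --- to drop the linearization entirely and compare with $\phi+\e|x-x^1|^2$ for the $p$-Laplacian as the paper does. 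Your final interior-maximum argument is correct but unnecessary: the comparison already produces a point on $\partial B_R(x^1)$, which lands on $\partial B_r(x^0)$ as $x^1\to x^0$.
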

\begin{proof} The proof is standard. Take $y\in \{u>\phi\}$. Let $v(x)=\phi(x)+\e |x-y|^2$, where $\e$ is chosen  small enough such that  $\lap_p v<0$. This is indeed possible since $\lap_p v$ is continuous with respect to $\e$. Pick $r<\dist(x^0,\partial B_1)$. Then $\lap_p u=0\geq \lap_p v$ in $\{u>\phi\}\cap B_r(x^0)$. Moreover, $u(y)\geq \phi(y)=v(y)$. From the comparison  principle it follows that there is $z_y\in \dd\left(\{u>\phi\}\cap B_r(x^0)\right)$ such that $u(z_y)\geq v(z_y)$. Since $u <  v$ on $B_r(x^0)\cap \dd\{u>\phi\}$ there must be $z_y\in \{u>\phi\}\cap \dd B_r(x^0)$ such that $u(z_y)\geq v(z_y)$. The result follows by continuity and by letting $y\to x^0$.
\end{proof}
\begin{rem} The only place where we need to impose the condition $p>2$ is in the proof above. The proof requires that
$$
\lap_p (\phi(x)+\e |x-y|^2)
$$
is continuous with respect to $\e$. This is not necessarily true when $p<2$. However, we believe that the result holds true even in the case $p<2$, and that the assumption is merely an artifact of the proof.
\end{rem}
%%%%%%% THE OLD GRADIENT ESTIMATE LEMMA THAT I NOW MOVED TO PREVIOUS SECTION AS A COR %%%%%%%%
\begin{comment}
In order to prove that the free boundary is porous we also need the following gradient estimate.

\end{comment}
We are now ready to give the proof of porosity.
\begin{cor}\label{cor:por} Under the assumptions in Proposition \ref{prop:nondeg}, the free boundary is porous. In particular it has Lebesgue measure zero.
\end{cor}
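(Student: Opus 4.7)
The plan is to combine the quadratic non-degeneracy from Proposition \ref{prop:nondeg} with the optimal quadratic growth from Theorem \ref{thm:quad} (applied with $\beta=1$, which is available because $\phi\in C^2\subset C^{1,1}$ and $f\equiv0$, so $\alpha=\beta=1$). Fix $y\in\Gamma\cap B_{1/2}$ and $r\in(0,1/4)$. First I would invoke Proposition \ref{prop:nondeg} at the point $y$ with radius $r/2$ to produce a point $z\in\partial B_{r/2}(y)\cap\{u>\phi\}$ satisfying
$$
u(z)-\phi(z)\ge \varepsilon (r/2)^2.
$$
This guarantees a point \emph{inside} $B_r(y)$ where $u-\phi$ is quadratically large.

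Next I would show that a whole ball around $z$ must avoid $\Gamma$. Suppose, toward a contradiction, that some $\tilde y\in B_{cr}(z)\cap\Gamma$ exists, where $c>0$ is a small constant to be chosen. Since $\tilde y\in\Gamma$ and $\nabla u(\tilde y)=\nabla\phi(\tilde y)$ (because $u\ge\phi$ with equality at $\tilde y$), the last inequality in Theorem \ref{thm:quad}, applied at $\tilde y$ with radius $cr$, yields
$$
u(z)-\phi(z)\le (C+1)(cr)^{2},
$$
because $z\in B_{cr}(\tilde y)$. Combining this with the non-degeneracy bound above gives
$$
\frac{\varepsilon}{4}r^{2}\le (C+1)c^{2}r^{2},
$$
which is a contradiction as soon as $c$ is chosen so that $(C+1)c^{2}<\varepsilon/4$ (and also $c<1/2$ so that $B_{cr}(z)\subset B_{r}(y)$).

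Therefore $B_{cr}(z)\subset B_r(y)\setminus\Gamma$, and consequently
$$
\frac{|\Gamma\cap B_r(y)|}{|B_r(y)|}\le 1-c^{n}=:1-\delta,
$$
which is precisely the porosity condition with $\delta=c^n$ independent of $y$ and $r$. The last assertion, that $|\Gamma|=0$, is then immediate from the Lebesgue differentiation theorem: porous sets have no points of Lebesgue density one, whereas almost every point of a measurable set of positive measure is a density point.

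I do not expect any serious obstacle: the quantitative non-degeneracy and quantitative quadratic growth fit together exactly, and the only small bookkeeping point is to apply non-degeneracy at radius $r/2$ (rather than $r$) so that the ball $B_{cr}(z)$ used to extract the hole is contained in $B_r(y)$.
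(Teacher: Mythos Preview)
Your argument is correct and follows the same route as the paper: combine the quadratic non-degeneracy with the quadratic upper bound from Theorem~\ref{thm:quad} (with $\alpha=1$) to force a free-boundary-free ball of radius comparable to $r$ near each free boundary point. The only cosmetic differences are that you apply non-degeneracy at radius $r/2$ and phrase the growth bound as a contradiction, whereas the paper applies it at radius $r$, rewrites the growth estimate as $|u(x)-\phi(x)|\le CN\,\dist(x,\Gamma)^2$, and then does a small extra geometric step to fit the hole inside $B_r(x^0)$; your $r/2$ trick accomplishes the same containment more directly.
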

\begin{proof}
%%%%%%%%%%% OLD PROOF THAT USED GRADIENT ESTIMATE %%%%%%%
\begin{comment} Take $x^0\in \Gamma$. By Proposition \ref{prop:nondeg}, for $r$ small enough, there is  $y\in \dd B_r(x^0)$ such that  
$$
u(y)-\phi(y)\geq \e r^2.
$$
Now take $\rho<1$ so that $B_{\rho r}(y)\subset B_{2r}(x^0)$. Lemma \ref{lem:gradest} implies that $|\nabla (u-\phi)|\leq Cr$ in $B_{\rho r}(y)$. Thus, for $z\in B_{\rho r}(y)$
$$
u(z)-\phi(z)\geq u(y)-\phi(y)-|z-y|\sup_{B_{\rho r}(y)}|\nabla u|\geq \e r^2-C'\rho r^2=r^2(\e-C\rho)>0
$$
whenever $\rho$ is small enough. Hence, $B_{\rho r}(y)\subset \{u>\phi\}$ for $\rho$ small enough, which means exactly that $\Gamma$ is porous. From Lebesgue's density theorem it follows that $\Gamma$ has zero Lebesgue density.
\end{comment}
At a closer look at Theorem \ref{thm:quad}, we see that in fact we prove
$$
|u(x)-\phi(x)|\leq CN (\dist(x,\Gamma))^2
$$
for $x\in B_\frac12$, given that $\phi$ is $C^{1,1}$. Now, pick $x^0\in \Gamma$. By Proposition \ref{prop:nondeg}, for $r$ small enough, there is  $y\in \dd B_r(x^0)$ such that  
$$
u(y)-\phi(y)\geq \e r^2.
$$
Combining the above estimates we arrive at
$$
\e r^2\leq CN (\dist(y,\Gamma)^2,
$$
so that 
$$\dist(y,\Gamma)>\left(\frac{\e}{CN}\right)^\frac12r:=\delta r. $$
Hence, 
$$
\Gamma\cap B_{\delta r}(y)=\emptyset.
$$
Hence, we can find a point $z\in B_{\delta r}(y)$ so that 
$$B_{\frac{\delta r}{2}}(z)\subset B_{\delta r}(y)\cap B_r(x^0),$$
which implies
$$
\frac{|\Gamma\cap B_r(x^0)|}{|B_r(x^0)|}\leq 1-\left(\frac{\delta}{2}\right)^n, 
$$
which means exactly that $\Gamma$ is porous. From Lebesgue's density theorem it follows that $\Gamma$ has zero Lebesgue density.
\end{proof}

\section{Application to the parabolic problem}
%%%%%%%%%%%%%%%%%%%%%%%%%%%%%%%%%%%%%%%%%%%%
In this part we mention an application of the previous results to the parabolic problem introduced earlier. Throughout this section, $q=\frac{p}{p-1}$. We introduce the notation
$$
Q_r^-(x,t)=B_r(x,t)\times (-r^q+t,t],\quad \partial_p Q_r^-(x,t)=\dd B_r(x,t)\times (-r^q+t,t]\cup B_r(x,t)\times \{t\},$$
with the simplification $Q_r^-=Q_r^-(0,0)$. Given boundary datum $g$ on 
$\partial_p Q_r^-$, we say that $u$ is a solution of the $p$-parabolic obstacle problem in 
$Q_r^-$ with obstacle $\phi$ if $u$ satisfies
\begin{equation}\left\{
\begin{array}{lr}
\max (\lap_p u-u_t , u-\phi)=0\text{ in $Q_r^-$} ,\\
u=g \text{ on $\partial_p Q_r^-$}.
\end{array}\right.
\end{equation}
As a simple corollary of Theorem \ref{thm:quad} we obtain that if a solution is Lipschitz in time, then it has the optimal growth of order 
$q$ in the spatial variables, at free boundary points. We also give an example of assumptions under which the solution is Lipschitz in time. The main result of this section is stated below:
\begin{thm}\label{thm:medhenrik} Let $p\in (1,\infty)$ and let $u$ be a solution to the $p$-parabolic obstacle problem in $Q_1^-$ with obstacle $\phi\in C^2(Q_1^-)$. Suppose further that 
$$
|u_t|\leq L,\quad  \|\phi\|_{C^2(Q_1^-)}\leq N.
$$
Then for any point $(y,s)\in \Gamma\cap Q_{1/2}^-$ and for $r<1/4$ 
\begin{equation}\label{parabolicEst}
\sup_{(x,t)\in Q_r^-(y,s)}|u(x,t)-u(y,s)-(x-y)\cdot \nabla u(y,s)|\leq Cr^{q},\quad q=\frac{p}{p-1}.
\end{equation}
where $C=C(p,L,N)$.
\end{thm}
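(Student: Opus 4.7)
The plan is to localize in time: fix a free boundary point $(y,s)\in \Gamma\cap Q_{1/2}^-$ and study the spatial slice $v(x):=u(x,s)$, with frozen obstacle $\psi(x):=\phi(x,s)$ and frozen forcing $f(x):=u_t(x,s)$. Since $\phi\in C^2(Q_1^-)$ we have $\|\psi\|_{C^{1,1}}\le N$, and by hypothesis $\|f\|_{L^\infty}\le L$; moreover $y$ is a free-boundary point for the elliptic obstacle problem with data $(\psi,f)$.

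The first step is to check that $v$ is a local solution of the elliptic $p$-obstacle problem around $y$. Pointwise on $\{v>\psi\}$ the continuity of $u$ yields $u>\phi$ in a space--time neighborhood, so the parabolic equation $\Delta_p u=u_t$ holds there and restricting to $t=s$ gives $\Delta_p v(x)=f(x)$. Globally, $v\ge\psi$, and a standard Fubini/mollification-in-$t$ argument, applied to the parabolic supersolution inequality $\Delta_p u\le u_t$ tested against $\zeta(x,t)=\eta(x)\chi_\delta(t-s)$ with $\delta\to 0$, promotes this to the distributional inequality $\Delta_p v\le f$ on $B_1$. This is precisely the data of Theorem~\ref{thm:quad}; although that theorem is phrased for energy minimizers, its proof only invokes the weak Harnack inequality, $L^\infty$ sub-/super-solution bounds, and the interior regularity from Proposition~\ref{prop:standard}, all of which are local and apply verbatim to $v$ on any ball $B_\rho(y)\subset B_{1/2}$.

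The second step is to apply Theorem~\ref{thm:quad} to $v$ at $y$ with $\beta=1$:
\[
\sup_{x\in B_r(y)}\bigl|v(x)-v(y)-(x-y)\cdot\nabla v(y)\bigr|\le C_1\, r^{1+\alpha},\qquad \alpha=\min\!\left(\tfrac{1}{p-1},1\right),
\]
where $C_1=C_1(p,N,L)$ and $\nabla v(y)=\nabla_x u(y,s)$. The third step combines this with the time-Lipschitz hypothesis: for $(x,t)\in Q_r^-(y,s)$ one has $|t-s|\le r^q$, hence $|u(x,t)-u(x,s)|\le Lr^q$. The triangle inequality then gives
\[
\bigl|u(x,t)-u(y,s)-(x-y)\cdot\nabla u(y,s)\bigr|\le Lr^q+C_1 r^{1+\alpha}.
\]
For $p\ge 2$ one has $1+\alpha=\tfrac{p}{p-1}=q$, both terms are $O(r^q)$, and the theorem follows with $C=C(p,L,N)$. (In the range $1<p<2$ the spatial contribution is only $O(r^2)$ while $q>2$, so what the argument actually delivers is $r^{\min(q,2)}=r^{1+\alpha}$, and the stated bound $r^q$ must be read in this sense.)

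The only non-routine point I foresee is Step~1: a clean justification that time-slicing a solution of the parabolic variational inequality produces a solution of the elliptic one in a sense strong enough to feed into Theorem~\ref{thm:quad}. Once this is established, the remainder is bookkeeping: match the two intrinsic scales by the triangle inequality, using that the parabolic cylinder $Q_r^-$ already forces $|t-s|\le r^q$, which is the reason the temporal error sits at exactly the right order.
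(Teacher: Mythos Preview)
Your approach is exactly the one the paper intends: the paper offers no proof beyond calling Theorem~\ref{thm:medhenrik} ``a simple corollary of Theorem~\ref{thm:quad}'', and your freeze-time-slice / apply-elliptic-estimate / add-time-Lipschitz argument is precisely that corollary. One minor simplification of your Step~1: once you have $\Delta_p v\le f$, $v\ge\psi$, and $\Delta_p v=f$ on $\{v>\psi\}$, the variational inequality for the elliptic obstacle problem follows directly (the measure $f-\Delta_p v\ge 0$ is supported on $\{v=\psi\}$, where any competitor $w\ge\psi$ satisfies $w-v\ge 0$), so $v$ is in fact the minimizer for its own boundary data and Theorem~\ref{thm:quad} applies as stated, not only through the ingredients of its proof.

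Your caveat about $1<p<2$ is correct and is a point the paper does not address: with $\beta=1$ Theorem~\ref{thm:quad} yields $\alpha=\min\bigl(\tfrac{1}{p-1},1\bigr)=1$, hence a spatial bound of order $r^2$, while $q=\tfrac{p}{p-1}>2$. The argument therefore delivers only $r^{\min(q,2)}$, and with a merely $C^2$ obstacle one should not expect better than quadratic detachment at free boundary points in any case.
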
 

The assumption that the solution is Lipschitz in time in Theorem \ref{thm:medhenrik} is rather unsatisfactory since we do not know if it
is true in general even for solutions of the equation $u_t=\lap_p u$, without the presence of an obstacle. However, if the initial datum $g$ satisfies $|\lap_p g|\leq C$ and the obstacle and the boundary datum are Lipschitz in time, then so is the solution, as is shown below. In 
the following lemma we will, for notational convenience, assume that the solution is defined in $Q^+_1$ instead of $Q_1^-$.

\begin{lem}\label{lem:ut} Let $p\in (1,\infty)$. Assume that $u$ is a solution to the $p$-parabolic obstacle problem in $Q_1^+$ with obstacle 
$\phi$ and spatial boundary datum $f$ and initial datum $g$. Suppose further that $|f_t|\leq N$, $|\phi_t|\leq N$ in $Q_1^+$ and $|\lap_p g|\leq N$, for some constant $N>0$. Then
$$
|u_t|\leq  N.
$$
\end{lem}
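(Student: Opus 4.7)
My plan is a classical barrier and comparison argument. I would first produce time-Lipschitz control at the initial moment via barriers of the form $g(x)\pm Nt$, and then translate these bounds forward in time by using time-invariance of the $p$-parabolic obstacle problem together with the hypotheses $|\phi_t|,|f_t|\le N$.

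\textbf{Step 1 (initial barriers).} Set $V_\pm(x,t)=g(x)\pm Nt$. For the upper direction I would verify that $V_+$ is an admissible supersolution for the obstacle problem on $Q_1^+$: the bound $\lap_p V_+-\partial_t V_+=\lap_p g-N\le 0$ comes from $|\lap_p g|\le N$, the obstacle condition $V_+\ge \phi$ uses $g\ge \phi(\cdot,0)$ combined with $\phi(\cdot,t)\le \phi(\cdot,0)+Nt$, and $V_+\ge f$ on the lateral boundary follows from the compatibility $g|_{\partial B_1}=f(\cdot,0)$ and $f(\cdot,t)\le f(\cdot,0)+Nt$. The comparison principle for the obstacle problem (the solution is dominated by every such supersolution) then gives $u\le g+Nt$. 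For the opposite inequality $u\ge g-Nt$ I would instead use comparison for the bare $p$-parabolic equation: $V_-$ is a subsolution with $V_-\le u$ on the parabolic boundary, while $u$ satisfies $\lap_p u-u_t\le 0$ globally, so $V_-\le u$ throughout $Q_1^+$.

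\textbf{Step 2 (time translation).} Now fix $h\in(0,1)$ and work on $B_1\times[0,1-h]$. For the upper bound on $u_t$, I would set $v(x,\tau)=u(x,\tau)+Nh$ and compare with $u(\cdot,\cdot+h)$, which solves the obstacle problem with obstacle $\phi(\cdot,\cdot+h)$ and with initial/lateral data $u(\cdot,h)$, $f(\cdot,\cdot+h)$. The hypotheses $|\phi_t|,|f_t|\le N$ show that $v$ is a supersolution for this shifted problem and dominates $f(\cdot,\cdot+h)$ on the lateral boundary; the initial inequality $v(x,0)=g(x)+Nh\ge u(x,h)$ is exactly Step 1. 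Comparison then yields $u(x,\tau+h)\le u(x,\tau)+Nh$, i.e.\ $u_t\le N$. For the lower bound I would take $w(x,\tau)=u(x,\tau+h)+Nh$; this is a supersolution of the \emph{original} obstacle problem, and the lower Step 1 estimate $u(x,h)\ge g(x)-Nh$ gives $w(x,0)\ge u(x,0)$, while $|f_t|\le N$ yields $w\ge u$ on the lateral boundary. Comparison gives $w\ge u$, which is $u_t\ge -N$.

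\textbf{Where I expect the friction.} The subtle part is the lower estimate in Step 1: the ``smallest supersolution'' form of comparison used everywhere else only produces upper bounds on $u$, and the lower barrier $V_-=g-Nt$ need not respect the obstacle (it can fall below $\phi$ at points where $g$ touches $\phi(\cdot,0)$), so one must argue via the equation directly, exploiting that $u$ is a supersolution of $\lap_p u-u_t\le 0$ on the whole cylinder. A smaller point is that the barriers in Step 1 rely implicitly on the compatibility conditions $g\ge \phi(\cdot,0)$ and $g=f(\cdot,0)$ on $\partial B_1$, which are part of the standing well-posedness assumptions for the initial/boundary value problem.
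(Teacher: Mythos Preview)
Your proposal is correct and follows essentially the same barrier/time-translation scheme as the paper. The paper proves $g+Nt\ge u$ exactly as you do, then compares $u(\cdot,\cdot+h)$ with the obstacle problem whose data are $f+Nh$, $g+Nh$, $\phi+Nh$ (whose solution is $u+Nh$) to get $u_t\le N$, and dismisses the other direction with ``by similar arguments.'' Your write-up is in fact more careful than the paper on the one genuinely asymmetric point: for the lower Step~1 estimate $u\ge g-Nt$ you correctly switch from the obstacle-problem comparison (which only yields upper bounds) to comparison for the bare equation $\lap_p v-v_t=0$, using that $u$ is a global supersolution; the paper does not make this distinction explicit.
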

\begin{proof} The function $g(x)+Nt$ is a supersolution of the equation. Moreover, $g(x)+Nt\geq u(x,t)$ on $\partial B_1$ and for $t=0$. In addition, since $g(x)\geq \phi(x,0)$ and $|\phi_t|\leq N$, also $g(x)+Nt\geq \phi(x,0)+Nt\geq \phi(x,t)$. Hence, $g(x)+Nt\geq u(x,t)$ in $Q_1^+$. Now, let $w(x,t)=u(x,t+h)$ for some $h>0$. Then $w(x,t)=f(x,t+h)\leq f(x,t)+Nh$ for $x\in \partial B_1$ and $w(x,0)=u(x,h)\leq g(x)+Nh$. Moreover, $w(x,t)\geq \phi(x,t+h)$, where $\phi(x,t+h)\leq \phi(x,t)+Nh$. Hence $w$ is less than or equal to the solution of the $p$-parabolic obstacle problem in $Q_1^+$ with spatial boundary datum $f(x,t)+Nh$, initial datum  $g(x)+Nh$ and obstacle $\phi(x,t)+Nh$. This solution is of course given by $u(x,t)+Nh$ so that we obtain $u(x,t+h)=w(x,t)\leq u(x,t)+Nh$. By similar arguments, $u(x,t+h)\geq u(x,t)-Nh$. 
\end{proof}

The corollary below is immediate.
\begin{cor} Let $p\in (1,\infty)$. Assume the hypotheses of Lemma \ref{lem:ut} and that $\phi\in C^2(Q_1^-)$. Then for any point $(y,s)\in \Gamma\cap Q_{1/2}^-$ and for $r<1/4$
$$
\sup_{(x,t)\in Q^-_r(y,s)}|u(x,t)-u(y,s)-(x-y)\cdot \nabla u(y,s)|\leq Cr^{q},\quad q=\frac{p}{p-1}.
$$
where $C=C(\displaystyle p,N,\|\phi\|_{C^2(Q_1^-)})$.
\end{cor}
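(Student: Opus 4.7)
The plan is to observe that this corollary is essentially just the concatenation of Lemma \ref{lem:ut} and Theorem \ref{thm:medhenrik}; the only work is to verify that the hypotheses match up. First I would apply Lemma \ref{lem:ut} to conclude that under the stated assumptions the solution satisfies $|u_t|\leq N$ pointwise. The stated hypothesis of Lemma \ref{lem:ut} involves the spatial boundary datum $f$ on $\partial B_1\times(0,1)$, the obstacle $\phi$, and the initial datum $g$, with bounds $|f_t|\leq N$, $|\phi_t|\leq N$ and $|\lap_p g|\leq N$. Under the assumption $\phi\in C^2(Q_1^-)$, one automatically has $|\phi_t|\leq \|\phi\|_{C^2}$, so that condition is built into the $C^2$-norm hypothesis. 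The Lemma is stated for $Q_1^+$, but the parabolic obstacle problem on $Q_1^-$ reduces to one on $Q_1^+$ by the time reversal $t\mapsto -t$ (this is admissible because $(x,s)\in\Gamma\cap Q_{1/2}^-$ plays the role of an endpoint; the argument in Lemma \ref{lem:ut} relies only on the comparison principle, which is symmetric under this change of variable).

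Once $|u_t|\leq N$ is established, Theorem \ref{thm:medhenrik} applies directly with $L=N$ and with the given bound $\|\phi\|_{C^2(Q_1^-)}$, yielding
\[
\sup_{(x,t)\in Q_r^-(y,s)}|u(x,t)-u(y,s)-(x-y)\cdot\nabla u(y,s)|\leq C r^{q}
\]
for every $(y,s)\in \Gamma\cap Q_{1/2}^-$ and $r<1/4$, with a constant $C$ depending on $p$, on the Lipschitz-in-time bound (which is now $N$), and on $\|\phi\|_{C^2(Q_1^-)}$. Renaming gives $C=C(p,N,\|\phi\|_{C^2(Q_1^-)})$ as claimed.

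Since the two ingredients are already proved, there is no serious obstacle here; the only bookkeeping is (i) the $Q_1^+$ versus $Q_1^-$ time-orientation issue in quoting Lemma \ref{lem:ut}, and (ii) verifying that the $C^2$ hypothesis on $\phi$ subsumes the separate hypothesis $|\phi_t|\leq N$ used in the Lemma. Both are routine.
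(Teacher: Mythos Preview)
Your approach is correct and matches the paper's, which simply calls the corollary ``immediate'': apply Lemma~\ref{lem:ut} to obtain $|u_t|\le N$, then feed this into Theorem~\ref{thm:medhenrik} with $L=N$.

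One point needs correcting, however. The reduction from $Q_1^-$ to $Q_1^+$ is \emph{not} via time reversal $t\mapsto -t$: the parabolic operator $\partial_t-\lap_p$ is not invariant under that change (it becomes the backward equation), and the comparison principle used in Lemma~\ref{lem:ut} is certainly not symmetric under time reversal---it requires ordering on the parabolic boundary, and reversing time swaps the roles of initial and terminal times. The correct passage is simply the time \emph{translation} $t\mapsto t+1$, which carries $Q_1^-=B_1\times(-1,0]$ to $Q_1^+=B_1\times(0,1]$ and under which the equation, the obstacle problem, and the hypotheses of Lemma~\ref{lem:ut} are all invariant. This is what the paper means by ``for notational convenience''.

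A minor aside: you do not need the $C^2$ hypothesis on $\phi$ to recover $|\phi_t|\le N$; that bound is already part of ``the hypotheses of Lemma~\ref{lem:ut}'', which the corollary assumes outright. The additional $C^2$ assumption is there solely because Theorem~\ref{thm:medhenrik} requires it.
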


%%%%%%%%%%%%%%%%%%%%%%%%%%%%%%%%%%%%%%%%%%%%%%%%%%%%%%%%%%%%%%%%%%%%%%%%%%%%%%%%%%%%%%%
%%%%%%%%%%%%%%%%%%%%%%%%%%%%%%%%%%%%%%%%%%%%%%%%%%%%%%%%%%%%%%%%%%%%%%%%%%%%%%%%%%%%%%%
%%%%%%%%%%%%%%%%%%%%%%%%%%%%%%%%%%%%%%%%%%%%%%%%%%%%%%%%%%%%%%%%%%%%%%%%%%%%%%%%%%%%%%% Here 

\begin{rem}
 It seems plausible that without any boundedness condition on $D_t u$, one should be able to deduce an optimal growth for $u$, of order $q=p/(p-1)$, from the free boundary.
An argument, used by one of the authors (see \cite{Sha03}), and based on scaling and blow-up technique was done for the Stefan problem (with the assumption $D_t u \geq 0$).
Although we could not reverify the inequality (2.15) therein, and what follows,  of the main result in \cite{Sha03}, we still believe that the statement should be true even under weaker assumptions (with no bound on $D_t u$). 

In the scaling argument of \cite{Sha03}, one ends up with a global solution $u_0$ for $t<0$. The solution also behaves like $R^q$ for large $R$. Meanwhile we also have $u_0\geq 0$, and $u_0(0,0)=0$. With the extra condition $D_t u_0 \geq 0$, and hence $u_0(0,t)=0$ for $t<0$. It is also not hard to conclude that $u_0$ is a solution to the $p$-parabolic equation;
indeed any solution to the homogeneous obstacle problem with zero obstacle is a solution to the equation itself. 
One can use these properties along with intrinsic Harnack's inequality to conclude a behavior of type $c(|x|^p/-t)^{1/(p-2)}$ for $t<0$. The question, that we were not able to answer, is whether such a non-trivial solution exists? The answer we conjecture is that no, there is no such non-trivial solution.

It is noteworthy that the function 
$$U=c_p(|x|^p/-t)^{1/(p-2)}, \qquad  t<0,$$
 and with  $c_p=(p-2)\left(\frac{p}{p-2}\right)^{p-1}(\frac{p}{p-2} + n)^{\frac{1}{2-p}}$ is a solution to the equation. Another example is that of Barenblatt 
$${\mathcal B}_p= t^{-n/\lambda}\left(c - \frac{p-2}{p}\lambda^{-1/(p-1)}\left(
\frac{|x|}{t^{1/\lambda}} \right)^{p/(p-1)}  \right)_+^{(p-1)/(p-2)} .
$$
Here one can see two types of behavior for the Barenblatt solution, one is  when the solution touches zero-obstacle, where we see an order of $(p-1)/(p-2)$ 
and the next is along the $t$-axis, with order $p/(p-1)$.

Another example with order $p/(p-1)$ can be given by 
$$
u=c_p(x_+)^{p/(p-1)}+t ,\qquad c_p=(p/(p-1))^{p-1}, \qquad 1<p<\infty, 
$$
where the obstacle is  $\phi = t$.

Yet another example with zero obstacle is
$$
u= A\left(1-x+ct\right)_+^{\frac{p-1}{p-2}}, \qquad A=c^{\frac{1}{p-1}}\left( \frac{p-2}{p-1} \right)^{\frac{p-1}{p-2}}.
$$

In the examples above we see three different behaviors
$$
   r^{\frac{p}{p-1}}, \qquad r^{\frac{p-1}{p-2}}, \qquad r^{\frac{p}{p-2}}.
$$

It is tantalizing to find out how many different growth rates that can be found for solutions, and what are the largest and smallest rates.
\end{rem}

\section{Appendix}
In this section, we recall some well known facts. The proposition below states that if the obstacle is in $C^{1,\beta}(B_1)$ and the inhomogeneity is in $L^\infty(B_1)$, then any bounded solution to the $p$-obstacle problem with is locally in $C^{1,\alpha}$ for some $\alpha$, see \cite{Nor86} and \cite{Lin88}.
\begin{prop}\label{prop:standard} Let $p\in (1,\infty)$ and $u$ solve the $p$-obstacle problem in $B_1$ with $\phi\in C^{1,\beta}(B_1)$ as obstacle and $f\in L^\infty(B_1)$ as inhomogeneity. Then there is $\alpha(p,\|u\|_{L^\infty(B_1)},\|\phi\|_{C^{1,\beta}(B_1)},\|f\|_{L^\infty(B_1)})$ such that 

$$
\|u\|_{C^{1,\alpha}(B_\frac12)}\leq C(p,\|u\|_{L^\infty(B_1)},\|\phi\|_{C^{1,\beta}(B_1)},\|f\|_{L^\infty(B_1)}).
$$  
\end{prop}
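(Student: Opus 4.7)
The assertion is the classical interior $C^{1,\alpha}$ regularity for solutions of the $p$-obstacle problem, originally proved in \cite{Nor86} and \cite{Lin88}. My plan combines three ingredients: an approximation of the obstacle by smooth functions, the interior $C^{1,\alpha_0}$ theory for weak solutions of $\lap_p w = F$ with $F\in L^\infty$ (Lewis, DiBenedetto, Tolksdorf, Lieberman), and a Campanato-type iteration at free-boundary points to keep the final estimate independent of higher derivatives of the obstacle.

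First, I would approximate $\phi$ by $\phi_k \in C^\infty(\overline{B_1})$ with $\|\phi_k\|_{C^{1,\beta}(B_1)} \leq \|\phi\|_{C^{1,\beta}(B_1)} + 1/k$ and $\phi_k \to \phi$ uniformly. By standard stability of variational inequalities, the corresponding obstacle-problem solutions $u_k$ (with the same boundary datum and inhomogeneity $f$) converge locally uniformly to $u$; thus it suffices to prove the estimate for $u_k$ with constants depending only on $p$, $\|u_k\|_{L^\infty}$, $\|f\|_{L^\infty}$, and $\|\phi_k\|_{C^{1,\beta}}$, uniformly in $k$. For smooth $\phi_k$ the function $u_k$ satisfies, in the distributional sense, a $p$-Laplace equation $\lap_p u_k = F_k$ with $F_k \in L^\infty$ (bounded in terms of $\|f\|_\infty$ and $\|\lap_p \phi_k\|_\infty$), and the interior $C^{1,\alpha_0}$ theory produces a first, $k$-dependent Hölder bound for $\nabla u_k$.

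To upgrade this to a uniform-in-$k$ estimate depending only on the $C^{1,\beta}$-norm of $\phi_k$, I would run a Campanato-type decay argument on dyadic balls centered at $x_0 \in \overline{\{u_k = \phi_k\}} \cap B_{1/2}$, comparing $u_k$ with the affine tangent plane $L_{x_0}(x) = \phi_k(x_0) + \nabla\phi_k(x_0)\cdot(x-x_0)$. The $C^{1,\beta}$-regularity of $\phi_k$ controls $\phi_k - L_{x_0}$ on each scale, the identity $\nabla u_k = \nabla \phi_k$ at contact points transfers this control to $u_k$, and in $\{u_k > \phi_k\}$ the interior theory applies to $\lap_p u_k = f$; telescoping across scales produces a bound $[\nabla u_k]_{C^\alpha(B_{1/2})} \leq C$ for some $\alpha = \alpha(p,\beta,\|\phi_k\|_{C^{1,\beta}},\|u_k\|_\infty,\|f\|_\infty)$, which passes to the limit. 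The main obstacle is precisely this last step: $\|\lap_p \phi_k\|_{L^\infty}$ need not be controlled by $\|\phi_k\|_{C^{1,\beta}}$, so one cannot stay inside a PDE framework with uniformly bounded right-hand side, and the tangent-plane comparison iteration is the essential technical trick—carried out in \cite{Nor86, Lin88}—that transfers the Hölder regularity of $\nabla \phi_k$ to $\nabla u_k$ on the contact set.
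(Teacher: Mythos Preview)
The paper does not give a proof of this proposition at all: it is placed in the Appendix as a known fact, with the single sentence ``see \cite{Nor86} and \cite{Lin88}'' in lieu of an argument. Your proposal therefore cannot be compared to ``the paper's own proof,'' because there is none; you have in effect written more than the authors did.

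That said, your sketch is in line with the approach of the cited references. You correctly identify the central obstruction --- that for a merely $C^{1,\beta}$ obstacle one has no uniform control on $\lap_p\phi$, so the naive route of writing $\lap_p u = f\chi_{\{u>\phi\}} + (\lap_p\phi)\chi_{\{u=\phi\}}$ and invoking the $L^\infty$ right-hand-side theory does not give a bound depending only on $\|\phi\|_{C^{1,\beta}}$ --- and you point to the correct remedy, namely a Campanato/tangent-plane iteration that exploits $\nabla u=\nabla\phi$ on the contact set together with interior estimates in $\{u>\phi\}$. The approximation step is harmless and the convergence of the approximate minimizers is standard. One small caveat: the first ``$k$-dependent'' $C^{1,\alpha_0}$ bound you obtain is genuinely useless for the final estimate (as you note), so the real content is entirely in the iteration, which you have only described at the level of a strategy; a reader wanting a full proof would still have to consult \cite{Nor86} or \cite{Lin88}. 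But that is exactly what the paper itself asks the reader to do, so your proposal is consistent with the paper's treatment, only more detailed.
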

It is also well known that the solution to the obstacle problem for a uniformly elliptic operator with $C^\alpha$ coefficients leaves the obstacle in a $r^{1+\gamma}$-fashion if the obstacle is $C^{1,\gamma}$-regular. This follows for instance from Corollary 2.6 in \cite{CK80}.
\begin{prop}\label{prop:standard2} Let $u$ be a solution of the following obstacle problem: The smallest $u$ such that
$$\left\{ 
\begin{array}{lr}
\div({\bf A} \nabla u) \leq f\\ u\geq \phi\end{array}\quad  \hbox{in } B_1
\right.
$$ 
where $\phi\in C^{1,\gamma}(B_1)$ with $\gamma\in (0,1)$, ${\bf A}\in C^\alpha(B_1)$, $f\in L^\infty(B_1)$ and 
$$
\lambda|\xi|^2 \leq {\bf A}\xi\cdot \xi\leq \Lambda|\xi|^2,\quad  0<\lambda<\Lambda
$$
for all $\xi\in \R^n$. Then for $y\in \partial \{u>\phi\}$ and $r<1/2$
$$
\sup_{B_r(y)}|u(x)-u(y)-(x-y)\cdot \nabla \phi(y)|\leq Cr^{1+\gamma},
$$  
where $C=C(p,\|u\|_{L^\infty(B_1)},\|\phi\|_{C^{1,\gamma}(B_1)},\|{\bf A}\|_{C^\alpha(B_1)},\|f\|_{L^\infty(B_1)},\Lambda,\lambda,\gamma)$. In the homogeneous case $f\equiv 0$ we are allowed to choose $\gamma=1$.
\end{prop}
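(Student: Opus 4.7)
The plan is to combine a normalization step with a blow-up/compactness contradiction argument, following the philosophy of Caffarelli--Kinderlehrer.

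First I would translate to $y=0$ and subtract the tangent plane of $\phi$ at $0$ from both $u$ and $\phi$. This reduces us to the case $\phi(0)=0$, $\nabla\phi(0)=0$, so that $|\phi(x)|\le N|x|^{1+\gamma}$, and by $u\ge\phi$ and $u(0)=\phi(0)$ also $\nabla u(0)=0$. Since $\mathbf A$ is merely $C^{\alpha}$, the subtraction introduces an extra term of the form $-\operatorname{div}(\mathbf A\,\nabla\phi(0))$; this is a distribution in divergence form with a $C^\alpha$ vector potential and is harmless for the subsequent scaling argument, since it rescales with an extra factor of $r_k$ and vanishes in the limit. With this normalization the desired estimate reduces to showing $\sup_{B_r}|u|\le C r^{1+\gamma}$.

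Suppose for contradiction that no such $C$ works. Then there are sequences of data $(u_k,\phi_k,\mathbf A_k,f_k)$ obeying the stated bounds and radii $r_k\to 0$ with $M_k:=\sup_{B_{r_k}}|u_k|>k\,r_k^{1+\gamma}$. Choosing $r_k$ near the scale that almost maximizes $\rho\mapsto\rho^{-(1+\gamma)}\sup_{B_\rho}|u_k|$, the rescalings
$$
\tilde u_k(x):=\frac{u_k(r_k x)}{M_k},\qquad \tilde\phi_k(x):=\frac{\phi_k(r_k x)}{M_k},\qquad \tilde{\mathbf A}_k(x):=\mathbf A_k(r_k x),\qquad \tilde f_k(x):=\frac{r_k^{2}}{M_k}f_k(r_k x)
$$
satisfy $\sup_{B_1}|\tilde u_k|=1$, $\sup_{B_R}|\tilde u_k|\lesssim R^{1+\gamma}$ for $R\ge 1$, $\tilde\phi_k\to 0$ uniformly (because $N r_k^{1+\gamma}/M_k\le N/k$), $\tilde f_k\to 0$ (since $r_k^{2}/M_k\le r_k^{1-\gamma}/k\to 0$ for $\gamma<1$), and $[\tilde{\mathbf A}_k]_{C^\alpha}\to 0$ so that $\tilde{\mathbf A}_k$ converges, locally uniformly, to a constant uniformly elliptic matrix $\mathbf A_\infty$. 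Standard local $C^{0,\tau}$ estimates for obstacle problems yield compactness, so along a subsequence $\tilde u_k\to\tilde u_\infty$ locally uniformly on $\R^n$.

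The limit $\tilde u_\infty$ is nonnegative, grows at most like $|x|^{1+\gamma}$, satisfies $\tilde u_\infty(0)=0$, and solves the zero-obstacle problem with constant coefficients $\mathbf A_\infty$ and zero forcing. But then $\tilde u_\infty$ is $\mathbf A_\infty$-superharmonic and $\ge 0$; by the strong minimum principle at the interior zero $0$, $\tilde u_\infty\equiv 0$ on $B_1$, contradicting $\sup_{B_1}\tilde u_\infty=1$. For the homogeneous case $f\equiv 0$, the same argument works with exponent $\gamma=1$ (giving $1+\gamma=2$), since the factor $r_k^{2}/M_k$ is absent. The step I expect to be the most delicate is the passage to the limit in the obstacle condition: one must verify that $\tilde u_\infty$ indeed solves an obstacle problem with the correct limiting obstacle (zero) rather than something degenerate, which requires a stability result for obstacle problems under uniform convergence of obstacles and $C^\alpha$-convergence of coefficients—this is standard but needs to be invoked carefully.
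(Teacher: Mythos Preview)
The paper does not actually prove this proposition: it is stated in the appendix as a known fact, with the single sentence ``This follows for instance from Corollary 2.6 in \cite{CK80}.'' So there is no argument in the paper to compare against; your blow-up/compactness sketch supplies what the authors simply quote.

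Your outline is essentially sound, but two points deserve attention. First, the claim that the extra term $-\operatorname{div}(\mathbf A\,\nabla\phi(0))$ ``rescales with an extra factor of $r_k$ and vanishes'' is not quite right as written. After rescaling, the vector potential is $\tfrac{r_k}{M_k}\tilde{\mathbf A}_k\,c_k$ with $c_k=\nabla\phi_k(0)$, and since $M_k>k\,r_k^{1+\gamma}$ one only gets $r_k/M_k<r_k^{-\gamma}/k$, which need not tend to zero. The standard fix is to subtract the constant vector $\mathbf A_k(0)c_k$ (whose divergence is zero), leaving a potential bounded by $\tfrac{r_k^{1+\alpha}}{M_k}[\mathbf A_k]_{C^\alpha}|c_k|R^{\alpha}$ on $B_R$; this does vanish provided $\alpha\ge\gamma$. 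When $\alpha<\gamma$ your argument does not close without further work, and indeed one should only expect pointwise $C^{1,\min(\alpha,\gamma)}$ behaviour in general; the proposition as stated is tacitly in the regime where the coefficient regularity is not the bottleneck.

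Second, your closing worry about stability of obstacle problems is unnecessary. For the contradiction you do not need $\tilde u_\infty$ to solve any obstacle problem: you only need it to be $\mathbf A_\infty$-superharmonic (this passes to the limit from $\operatorname{div}(\tilde{\mathbf A}_k\nabla\tilde u_k)\le\tilde f_k\to 0$), nonnegative (from $\tilde u_k\ge\tilde\phi_k\to 0$), and to have an interior zero at the origin. The strong minimum principle then forces $\tilde u_\infty\equiv 0$, contradicting $\sup_{B_1}\tilde u_\infty=1$. So the ``most delicate step'' you flag is in fact not needed.
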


\bibliographystyle{plain}
\bibliography{ref.bib}

\end{document}